\def\BibTeX{{\rm B\kern-.05em{\sc i\kern-.025em b}\kern-.08em
    T\kern-.1667em\lower.7ex\hbox{E}\kern-.125emX}}
\newtheorem{theorem}{Theorem}
\newtheorem{lemma}{Lemma}
\newtheorem{definition}{Definition}
\newtheorem{assumption}{Assumption}
\newtheorem{remark}{Remark}
\newtheorem{example}{Example}
\newcommand{\Ne}{\mathbb{N}}
\renewcommand{\Re}{\mathbb{R}}
\newcommand{\vz}{\boldsymbol{z}}
\newcommand{\Alg}{\mathcal{A}}
\newcommand{\Blg}{\mathcal{B}}
\newcommand{\Prob}{\mathsf{P}}
\newcommand{\Exp}{\mathsf{E}}
\begin{document}

\title{Improved Compression-Based PAC Bounds\\for General Scenario Decision Making}

\author{Guillaume O.~Berger
\thanks{This project has received funding from the Belgian National Fund for Scientific Research (F.R.S.--FNRS).
GB is with ICTEAM institute, UCLouvain, Louvain-la-Neuve, Belgium.
He is a FNRS Postdoctoral Researcher.
(e-mails: guillaume.berger@uclouvain.be).}}

\maketitle

\begin{abstract}
Scenario decision making is a distribution-free approach to decision making in the presence of stochastic constraints, which consists in drawing samples of the random constraint (called ``scenarios'') and making a decision that satisfies all---or a predefined fraction---of the sampled constraints.
Several bounds have been proposed in the literature on the number of samples that is sufficient to guarantee with high probability that the risk of the scenario decision, i.e., its probability of violating a random constraint, is low.
We focus on sample-complexity bounds that depend on the compression size of the decision-making problem.
We propose new compression-based sample-complexity bounds that improve upon the existing ones without requiring stronger assumptions on the problem.
Specifically, our bounds do not rely on the so-called ``nondegeneracy'' assumption (which is widely used in the scenario decision-making literature, but can be limiting in practice); our bounds are better than the existing ones not relying on the nondegeneracy assumption, and is very close to the best known bounds relying on the nondegeneracy assumption.
\end{abstract}

% Enter key words or phrases in alphabetical order, separated by commas. Using the IEEE Thesaurus can help you find the best standardized keywords to fit your article. Use the thesaurus access request form for free access to the IEEE Thesaurus: https://www.ieee.org/publications/services/thesaurus-access-page.com.
\begin{IEEEkeywords}
Optimization, Randomized Algorithms, Statistical Learning.
\end{IEEEkeywords}

%%%%%%%%%%%%%%%%%%%%%%%%%%%%%%%%%%%%%%%%%%%%%%%%%%%%%%%%%%%%%%%%%%%%%%%%%%%%%%%%
\section{Introduction}\label{sec:introduction}

\IEEEPARstart{R}{isk-aware} decision making is an important problem in many applications of engineering, such as control, energy planning, healthcare, etc.
In this context, the risk of a decision is defined as the probability that it violates some random constraint, e.g., the probability that the planned trajectory of an autonomous vehicle collides with some stochastically moving obstacle (other cars, pedestrians, etc.).
In many applications of risk-aware decision making, the probability distribution of the random constraint is unknown (e.g., in trajectory planning, or energy management).
This makes the problem of making decisions with a low risk challenging in general.
The approach of \emph{scenario decision making}~\cite{campi2008theexact,alamo2009randomized,campi2011asamplinganddiscarding,margellos2014ontheroad,esfahani2015performance,campi2018waitandjudge,campi2018ageneral,garatti2022risk,campi2023compression,rocchetta2024asurvey} provides an effective way to address this challenge by leveraging the principle of sample-based methods.
The approach consists in drawing $N$ samples (called \emph{scenarios}) of the random constraint, and making a decision that satisfies all, or a predefined fraction, of the sampled constraints.
The decision made from the scenarios is called the \emph{scenario decision}.
A key aspect of this approach is that under some assumptions on the problem, one can provide guarantees on the risk of the scenario decision.
In this work, we focus on \emph{PAC} (\emph{Probably Approximately Correct}) guarantees~\cite{shalevshwartz2014understanding,mohri2018foundations}: namely, upper bounds on the probability (with respect to the sampling of the $N$ scenarios) that the risk of the scenario decision exceeds some tolerance $\epsilon$.
These bounds, referred to as \emph{confidence bounds}, depend on $N$, $\epsilon$, and other problem-dependent quantities generally known as ``problem complexity''.
Common problem complexity measures include the VC dimension, the Rademacher complexity, and the compression size; see, e.g.,~\cite{rocchetta2024asurvey} for a survey.

\begin{figure}
    \centering
    \begin{tikzpicture}
        \pgfmathtruncatemacro{\N}{6};
        \pgfmathsetmacro{\dx}{2}
        \foreach \i in {1,...,\N}
        {
            \pgfmathsetmacro{\y}{2-2*(\i-1)/(\N-1)}
            \node (c-\i) at (0,\y) {$z_\i$};
        }
        \draw[rounded corners] (-0.2,-0.2) rectangle (0.2,2.2);
        \draw (\dx-0.5,0) rectangle (\dx+0.5,0.8);
        \node at (\dx,0.4) {Comp.};
        \draw[-latex] (0.2,0.6) -- (\dx-0.5,0.4);
        \pgfmathsetmacro{\dy}{2/(\N-1)}
        \foreach \j/\i in {1/2,2/5,3/6}
        {
            \pgfmathtruncatemacro{\k}{(\j-2)}
            \node (c-\j) at (2*\dx,0.4-\k*\dy) {$z_\i$};
        }
        \draw[rounded corners] (2*\dx-0.2,0.4-\dy-0.2) rectangle (2*\dx+0.2,0.4+\dy+0.2);
        \draw[-latex] (\dx+0.5,0.4) -- (2*\dx-0.2,0.4);
        \draw (3*\dx-0.5,0) rectangle (3*\dx+0.5,0.8);
        \node at (3*\dx,0.4) {Alg.};
        \draw[-latex] (2*\dx+0.2,0.4) -- (3*\dx-0.5,0.4);
        \node (dec) at (4*\dx,1) {$x$};
        \draw[-latex] (3*\dx+0.5,0.4) -- (dec);
        \draw (2*\dx-0.5,1.2) rectangle (2*\dx+0.5,2);
        \node at (2*\dx,1.6) {Alg.};
        \draw[-latex] (0.2,1.3) -- (2*\dx-0.5,1.6);
        \draw[-latex] (2*\dx+0.5,1.6) -- (dec);
    \end{tikzpicture}
    \caption{Scenario decision algorithm (Alg.) that has compression size $3$: any input set of scenarios (e.g., $z_1,\ldots,z_6$) can be reduced or compressed (Comp.) to a subset of size $3$ (e.g., $z_2,z_5,z_6$) and give the same decision ($x$) when given as input to Alg.}
    \label{fig:compression}
\end{figure}

In this paper, we focus on the compression size, introduced in 1986 by Littlestone and Warmuth~\cite{littlestone1986relating}.
The compression size of a scenario decision-making algorithm is the smallest number of scenarios to which any finite set of scenarios can be reduced while providing the same decision; see Figure~\ref{fig:compression} for an illustration.
In~\cite{littlestone1986relating}, a confidence bound is provided for scenario decision-making algorithms with bounded compression size.
A better confidence bound is provided in~\cite{campi2008theexact} under additional assumptions on the problem (among others, each constraint must define a convex set of admissible decisions, and the set of admissible decisions with respect to all constraints must have nonempty interior).
This bound is shown to be tight for certain instances of such problems (called \emph{fully-supported})~\cite{campi2008theexact}.
A similar confidence bound is proposed in~\cite{calafiore2010random} when the algorithm and the probability distribution of the random constraint satisfy the so-called \emph{nondegeneracy} assumption (essentially saying that the minimal-size compression set is unique).
This assumption has been widely used in the scenario decision making literature~\cite{campi2011asamplinganddiscarding,campi2018waitandjudge,garatti2022risk,romao2021tight,romao2023ontheexact}; however, it can be restrictive in practice (e.g., it excludes discrete probability distributions) and is by nature impossible to check when the probability distribution is unknown.

In the aforementioned works \cite{littlestone1986relating,campi2008theexact}, the scenario decision satisfies all the sampled constraints.
Therefore, we call these algorithms \emph{consistent}.
By contrast, in \emph{sample-and-discard} algorithms, introduced in~\cite{campi2011asamplinganddiscarding}, the scenario decision is allowed to violate some of the sampled constraints (which are ``discarded'').
A confidence bound for these algorithms under the nondegeneracy assumption and the so-called \emph{conformity} assumption (essentially saying that the discarded constraints are violated by the scenario decision) is proposed in~\cite{campi2011asamplinganddiscarding}.
A better bound (reminiscent of the one in~\cite{campi2008theexact}) is proposed in~\cite{romao2021tight} (see also \cite{romao2023ontheexact}) when additionally the solution of the sample-based problem is computed using so-called \emph{cascade optimization} (removing violated constraints sequentially, i.e., in ``cascade'').
This bound is shown to be tight for some instances (called \emph{fully-supported})~\cite{romao2023ontheexact}.
However, the use of cascade optimization can be restrictive, sub-optimal, and computationally expensive.
Furthermore, the analysis in~\cite{campi2011asamplinganddiscarding,romao2021tight,romao2023ontheexact} still requires the nondegeneracy assumption, which can be restrictive.
Finally, in~\cite{margellos2015ontheconnection}, a confidence bound that does not require the nondegeneracy assumption is proposed.
However, this bound is looser than the ones in~\cite{campi2011asamplinganddiscarding,romao2021tight,romao2023ontheexact}, and still assumes the conformity assumption.

The contribution of this paper is to provide new compres\-sion-based confidence bounds for consistent and sample-and-discard scenario decision-making algorithms, that do not require nondegeneracy, conformity, or similar assumptions on the problem and the algorithm.
These bounds are to the best of our knowledge the best available bounds in the literature without those assumptions.

\subsection*{Related work}

The initial derivation of the main result (Theorem~\ref{thm:our-bound-strong}) was strongly inspired by the recent work~\cite{garatti2021therisk}.
This work provides bounds on the risk of scenario decisions using the \emph{wait-and-judge} approach (initially introduced in~\cite{campi2018waitandjudge}).
In the wait-and-judge approach, the compression size needs not be bounded for all sets of scenarios.
However, given a set of $N$ scenarios, one can compute its compression size (which can be any integer between $0$ and $N$), and from it, derive a probabilistic upper bound on the risk.
Notably, the wait-and-judge bound in~\cite{garatti2021therisk} does not require additional assumptions on the problem or the algorithm.
When the compression size of every set of $N$ scenarios is upper bounded, then the wait-and-judge bound still applies, and can be turned into a compression-based confidence bound (Theorem~\ref{thm:bound-strong}, item 3).
Interestingly, the resulting confidence bound already improves upon the confidence bound proposed in\cite{littlestone1986relating}.
By looking carefully at the derivation of the wait-and-judge bound in~\cite{garatti2021therisk}, one can observe that some parameters of the proof can be optimized to provide a better confidence bound in the case the compression size of every set of $N$ scenarios is upper bounded.
The best set of parameters that we were able to find for this purpose lead to the confidence bound presented in our main result (Theorem~\ref{thm:our-bound-strong}).
The proof in~\cite{garatti2021therisk} relies on advanced technical results related to duality in semi-infinite optimization.
By contrast, the proof that we propose in this paper for our main result relies only on elementary probability theory.

\textit{Notation.}
We let $\Ne$ be the set of nonnegative integers.
For $n\in\Ne$, we let $[n]=\{1,\ldots,n\}$.
For a set $A$, we let $A^*=\bigcup_{m=0}^\infty A^m$.
For $m\in\Ne$ and a finite set $A$, the set of all subsets of $A$ with $m$ elements is denoted by $\binom{A}{m}$.

%%%%%%%%%%%%%%%%%%%%%%%%%%%%%%%%%%%%%%%%%%%%%%%%%%%%%%%%%%%%%%%%%%%%%%%%%%%%%%%%%%%%%%%%%%%%%%%%%%%%
\section{Problem Statement and Background}\label{sec:problem-statement}

We introduce the problem of risk-aware scenario decision making.%
\footnote{The definitions and concepts presented in this section are classical, and can be found, e.g., in~\cite{alamo2009randomized,margellos2015ontheconnection,garatti2022risk,campi2023compression}.}
We assume given a set $X$ of \emph{decisions} and a set $Z\subseteq2^X$ of \emph{constraints} on $X$.
Given a decision $x\in X$ and a constraint $z\in Z$, we say that $x$ \emph{satisfies} $z$ if $x\in z$; otherwise, we say that $x$ \emph{violates} $z$.

\begin{example}\label{exa:path}
In the following optimization problem:
\[
\min_{x\in\Re^2}\; \lVert x\rVert \quad \text{s.t.} \quad a^\top (x - c) \leq 1 \;\; \forall\, a\in\Re^2,\:\lVert a\rVert\leq1,
\]
where $c\in\Re^2$ is fixed, the decision space is $X=\Re^2$, and the constraint space is $Z=\{X_a\subseteq X : a\in\Re^2,\:\lVert a\rVert\leq1\}$, where $X_a=\{x\in\Re^2 : a^\top (x - c)\leq 1\}$.
\end{example}

Finding a decision that satisfies all constraints $z$ in $Z$ is often intractable if $Z$ is large or unknown.
An approach to circumvent this---in the case where $Z$ can be sampled---is to sample $N$ constraints $z_1,\ldots,z_N$ from $Z$, and find a decision $x$ based on the sampled constraints.
This process is called a \emph{scenario decision algorithm}, which is thus a function from tuples of constraints to decisions:

\begin{definition}
A \emph{scenario decision algorithm} is a function that given a tuple of constraints $(z_1,\ldots,z_N)\in Z^*$ returns a decision $x\in X$.
Hence, it is a function $\Alg:Z^*\to X$.
\end{definition}

\begin{remark}
We consider \emph{tuples} of constraints, instead of \emph{sets} of constraints, because in Section~\ref{sec:weak-consistent} the multiplicity of the constraints will play a role.
Furthermore, we need to describe the probability measure of the sets of i.i.d.~sampled constraints, which is much easier to define when working with tuples of constraints (it is simply the product measure).
\end{remark}

\begin{example}\label{exa:path-algo}
Continuing Example~\ref{exa:path}, a scenario decision algorithm $\Alg$ can be defined as follows: given $z_i=X_{a_i}$, define $\Alg(z_1,\ldots,z_N)$ as the optimal solution of
\[
\min_{x\in\Re^2}\; \lVert x\rVert \quad \text{s.t.} \quad a_i^\top (x - c) \leq 1 \;\; \forall\, i\in[N].
\]
This is an instance of \emph{scenario optimization}~\cite{campi2008theexact}.
\end{example}

The output of a scenario decision algorithm is sometimes called the \emph{scenario decision}.
Since the scenario decision is obtained from a subset of the constraints in $Z$, one can generally not hope that it satisfies \emph{all} the constraints in $Z$.
However, one can hope that it satisfies all constraints in $Z$ except possibly those in a subset of small measure.
This property is formalized by the notion of risk:

\begin{definition}
Given a probability measure $\Prob$ on $Z$, and a decision $x\in X$, the \emph{risk} (also called \emph{violation probability}) of $x$ w.r.t.~$\Prob$, denoted by $V_\Prob(x)$, is the probability that $x$ violates a random constraint $z\in Z$, i.e., $V_\Prob(x) \triangleq \Prob[\{z\in Z : x\notin z\}]$.
\end{definition}

The risk of the scenario decision depends on the scenarios, which are sampled i.i.d.~at random according to the distribution of the random constraint.
Given a risk tolerance $\epsilon$ and a number of scenarios $N$, we aim to upper bound the probability that the risk of the scenario decision is below $\epsilon$:

\begin{definition}\label{def:pac-bound}
Consider a scenario decision algorithm $\Alg$.
A \emph{confidence bound} for $\Alg$ is a function $q:(0,1]\times\Ne\to[0,1]$ such that for any \emph{tolerance} $\epsilon\in(0,1)$, \emph{sample size} $N\in\Ne$ and probability distribution $\Prob$ on $Z$, it holds with probability $1-q(\epsilon,N)$ that if one samples $N$ scenarios $(z_1,\ldots,z_N)\in Z^N$ i.i.d.~according to $\Prob$, then the scenario decision returned by $\Alg$ has risk below $\epsilon$, i.e.,
\[
\Prob^N\!\left[ \left\{ \vz \in Z^N : V_\Prob(\Alg(\vz))>\epsilon \right\} \right] \leq q(\epsilon,N),
\]
where $\vz$ is a shorthand notation for $(z_1,\ldots,z_N)$.
\end{definition}

\begin{remark}
If for all $\epsilon\in(0,1]$, $\lim_{N\to\infty}q(\epsilon,N)=0$, then the scenario decision algorithm is said to be \emph{PAC} (\emph{Probably Approximately Correct})~\cite{valiant1984atheory,shalevshwartz2014understanding,mohri2018foundations}.
\end{remark}

Our goal in this paper is to provide confidence bounds for scenario decision algorithms whose compression size (see Section~\ref{ssec:compression} below) is bounded.
We make two assumptions on the scenario decision algorithms that we consider in this work: the algorithm is permutation invariant and stable:

\begin{assumption}\label{assum:well}
The scenario decision algorithm $\Alg$ is
\begin{itemize}
    \item \emph{permutation invariant}: for all $(z_1,\ldots,z_N)\in Z^*$ and all permutations $(i_1,\ldots,i_N)$ of $[N]$, $\Alg(z_1,\ldots,z_N)=\Alg(z_{i_1},\ldots,z_{i_N})$;
    \item \emph{stable}: for all $(z_1,\ldots,z_{N+1})\in Z^{\geq1}$, $\Alg(z_1,\ldots,z_N)\in z_{N+1}$ implies that $\Alg(z_1,\ldots,z_{N+1})=\Alg(z_1,\ldots,z_N)$.
\end{itemize}\vskip0pt
\end{assumption}

Those assumption are very mild and standard in the literature~\cite{campi2008theexact,alamo2009randomized,calafiore2010random,campi2011asamplinganddiscarding,margellos2015ontheconnection,romao2021tight,romao2023ontheexact,campi2023compression,rocchetta2024asurvey}.
In Sections~\ref{sec:strong-consistent} and~\ref{sec:weak-consistent}, we will consider separately two additional assumptions on the algorithm: (i) that the scenario decision satisfies all the sampled constraints (called \emph{consistent}), and (ii) that the scenario decision satisfies all but a fixed number $r$ of the sampled constraints (called \emph{sample-and-discard}).

\subsection{Compression Sets}\label{ssec:compression}

Before ending this section, let us recall the notion of \emph{compression}, which is instrumental in this work since we study confidence bounds based on the compression size.
This notion was introduced in 1986 by Littlestone and Warmuth~\cite{littlestone1986relating} in an unpublished manuscript cited in~\cite{floyd1995sample}.

The notion of compression describes the property that the input tuple of scenarios $\vz\in Z^*$ is compressible in the sense that a fixed-length subtuple $\vz'$ of $\vz$ leads to the same output:

\begin{definition}[Compression]\label{def:compression}
Given a scenario decision algorithm $\Alg$ and $\vz\coloneqq(z_1,\ldots,z_N)\in Z^*$, a subset $\{i_1,\ldots,i_M\}\subseteq[N]$, is a \emph{compression set} for $\vz$ and $\Alg$ if $\Alg(z_{i_1},\ldots,z_{i_M})=\Alg(\vz)$.
The set of all compression sets of $\vz$ and $\Alg$ is denoted by $\kappa(\vz;\Alg)$.\footnote{When clear from the context, we drop the dependence on $\Alg$ and simply write $\kappa(\vz)$.}
The \emph{compression size} of $\Alg$, denoted by $\rho(\Alg)$, is the infimum of all $d\in\Ne$ such that for every $\vz\in Z^*$, there is a compression set $I\in\kappa(\vz;\Alg)$ with $\lvert I\rvert\leq d$.
\end{definition}

We review confidence bounds available in the literature for scenario decision algorithms whose compression size is bounded.
We then propose new confidence bounds under Assumption~\ref{assum:well}.
We do this for consistent algorithms (Section~\ref{sec:strong-consistent}) and sample-and-discard algorithms (Section~\ref{sec:weak-consistent}).

%%%%%%%%%%%%%%%%%%%%%%%%%%%%%%%%%%%%%%%%%%%%%%%%%%%%%%%%%%%%%%%%%%%%%%%%%%%%%%%%%%%%%%%%%%%%%%%%%%%%
\section{Consistent Algorithms}\label{sec:strong-consistent}

In this section, we focus on consistent scenario decision algorithms, i.e., algorithms for which the scenario decision satisfies all the sampled constraints:

\begin{definition}\label{def-strong-consistent}
A scenario decision algorithm $\Alg$ is \emph{consistent} if for all $\vz\coloneqq(z_1,\ldots,z_N)\in Z^*$, $\Alg(\vz)\in\bigcap_{i=1}^N z_i$.
\end{definition}

The algorithm in Example~\ref{exa:path-algo} is consistent.

\subsection{Previous Bounds in the Literature}

We review confidence bounds available in the literature for consistent algorithms:

\begin{theorem}\label{thm:bound-strong}
Consider a consistent scenario decision algorithm $\Alg$ satisfying Assumption~\ref{assum:well}.
Let $d\in\Ne$, and assume that $\rho(\Alg)\leq d$.
Then, confidence bounds for $\Alg$ are given by
\begin{enumerate}
    \item $q(N,\epsilon)=\binom{N}{d}(1-\epsilon)^{N-d}$, $N\in\Ne_{\geq d}$ and $\epsilon\in(0,1]$; see~\cite[Theorem~6]{floyd1995sample},~\cite[Theorem~2]{margellos2015ontheconnection};
    \item $q(N,\epsilon)=\sum_{i=0}^{d-1}\binom{N}{i}\epsilon^i(1-\epsilon)^{N-i}$, $N\in\Ne_{\geq d}$ and $\epsilon\in(0,1]$, under the ``nondegeneracy'' assumption~\cite[Definition~2.7]{calafiore2010random} on $\Alg$ and $\Prob$, or the ``convex feasible set with nonempty interior'' assumption~\cite[Assumption~1]{campi2008theexact} on $\Alg$; see~\cite[Theorem~3.3]{calafiore2010random},~\cite[Theorem~1]{campi2008theexact};
    \item $q(N,\epsilon)=\frac{N\binom{N}{d}(1-\epsilon)^{N-d}}{\sum_{m=d}^{N-1}\binom{m}{d}(1-\epsilon)^{m-d}}$, $N\in\Ne_{\geq d}$ and $\epsilon\in(0,1]$; derived from~\cite[Theorem~1]{garatti2021therisk}.
\end{enumerate}\vskip0pt
\end{theorem}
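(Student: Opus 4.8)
The three items are quoted from different sources, so the plan is to give one self-contained elementary argument for item~1 (the only one I would prove from scratch) and to indicate how items~2 and~3 are recovered from the cited literature.

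For item~1 the plan is a union bound over candidate compression sets. The first step is a reduction: using \emph{stability} together with \emph{consistency}, any compression set can be padded to size exactly $d$. Indeed, if $I\in\kappa(\vz)$ and $j\notin I$, then consistency gives $\Alg(\vz)\in z_j$, and since $\Alg(z_I)=\Alg(\vz)$ we get $\Alg(z_I)\in z_j$; applying stability (and permutation invariance to place $z_j$ last) yields $\Alg(z_{I\cup\{j\}})=\Alg(z_I)=\Alg(\vz)$, so $I\cup\{j\}$ is again a compression set. Hence for $N\ge d$ every $\vz\in Z^N$ admits a compression set of size \emph{exactly} $d$, and the bad event $\{\vz : V_\Prob(\Alg(\vz))>\epsilon\}$ is contained in $\bigcup_{I\in\binom{[N]}{d}} B_I$, where $B_I=\{\vz : I\in\kappa(\vz)\}\cap\{\vz : V_\Prob(\Alg(\vz))>\epsilon\}$. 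Restricting the union to sets of size exactly $d$ (rather than summing over all sizes $\le d$) is exactly what collapses the natural estimate $\sum_{k\le d}\binom{N}{k}(1-\epsilon)^{N-k}$ to the single term $\binom{N}{d}(1-\epsilon)^{N-d}$.

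Second, by \emph{permutation invariance} the events $B_I$ all carry the same probability under the product measure $\Prob^N$, so it suffices to bound $\binom{N}{d}\,\Prob^N[B_{[d]}]$ with $[d]=\{1,\ldots,d\}$. On $B_{[d]}$ the decision equals $x:=\Alg(z_1,\ldots,z_d)$, which depends only on the first $d$ scenarios; consistency forces $x\in z_j$ for every $j\in\{d+1,\ldots,N\}$, while $V_\Prob(x)>\epsilon$ there. Conditioning on $(z_1,\ldots,z_d)$ and using independence of the remaining $N-d$ scenarios, the conditional probability that the fixed $x$ satisfies all of $z_{d+1},\ldots,z_N$ is $(1-V_\Prob(x))^{N-d}\le(1-\epsilon)^{N-d}$. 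Integrating over $(z_1,\ldots,z_d)$ gives $\Prob^N[B_{[d]}]\le(1-\epsilon)^{N-d}$, and multiplying by $\binom{N}{d}$ yields item~1.

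Items~2 and~3 I would not reprove in full. Item~2 is the tight bound of \cite{campi2008theexact,calafiore2010random}: under nondegeneracy the minimal compression set is almost surely unique and of size exactly $d$, so the events $B_I$ become \emph{disjoint}; replacing the union bound by this exact decomposition (together with the Beta-distribution computation of the risk) removes the slack and produces $\sum_{i=0}^{d-1}\binom{N}{i}\epsilon^i(1-\epsilon)^{N-i}$. Item~3 is obtained by specializing the wait-and-judge bound of \cite[Theorem~1]{garatti2021therisk}: that bound controls the risk as a function of the \emph{realized} compression size, so when $\rho(\Alg)\le d$ one specializes it to the case where the realized size never exceeds $d$ and simplifies the resulting expression via identities for partial sums of binomial coefficients to get the stated closed form. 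The main obstacle is the reduction step for item~1 --- the nontrivial use of stability that lets us restrict the union to compression sets of size exactly $d$; the remaining work (symmetry, conditioning, the independence bound) is routine once measurability of $\Alg$ and of $x\mapsto V_\Prob(x)$ is assumed, as is standard.
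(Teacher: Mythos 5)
Your proposal is correct and matches the paper's treatment: the paper offers no proof of this theorem, presenting it as a survey of literature results with citations, and your self-contained argument for item~1 (pad any compression set to size exactly $d$ via consistency plus stability, take a union bound over $\binom{[N]}{d}$, and condition on the $d$ selected scenarios to get the $(1-\epsilon)^{N-d}$ factor) is precisely the classical argument of the cited references \cite{floyd1995sample,margellos2015ontheconnection}, which the paper itself acknowledges reusing later in the proof of Lemma~\ref{lem:key-lemma-strong}. Deferring items~2 and~3 to \cite{campi2008theexact,calafiore2010random} and \cite{garatti2021therisk} is likewise what the paper does.
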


\begin{remark}\label{rem:not-pac-strong}
Note that $q(N,\epsilon)$ in item 2) is not a confidence bound \emph{stricto sensu} because it is not satisfied for every $\Prob$ (indeed, $\Prob$ needs to satisfy the nondegeneracy assumption); hence, it is not ``distribution-free''.
The nondegeneracy assumption is restrictive: it excludes distributions that have ``concentrated masses'' (aka.~``atoms'').
\end{remark}

\begin{remark}
The bound in item 3) is derived from the ``wait-and-judge'' bound in~\cite{garatti2021therisk}.\footnote{To be complete, let us mention that wait-and-judge bounds were proposed, e.g., in~\cite{campi2018waitandjudge,garatti2022risk}.
These bounds are better than the one in~\cite{garatti2021therisk}; however, they require the nondegeneracy assumption.
A key achievement of~\cite{garatti2021therisk} is to remove this assumption and obtain a bound that is almost as good.}
This provides an upper bound on the risk of the scenario decision, with predefined confidence, as a function of the confidence level, the number of samples, and \emph{the compression size of the tuple of sampled constraints}.
When the compression size of any tuple of sampled constraints is bounded by $d$, then a confidence bound in the sense of Definition~\ref{def:pac-bound} can be retrieved, which is presented in item 3).

As mentioned in the ``Related work'' section, our main result (Theorem~\ref{thm:our-bound-strong} below), which improves upon the bounds in items 1) and 3), was initially obtained by optimizing parameters in the proof of~\cite[Theorem~1]{garatti2021therisk}, to obtain better confidence bounds when the compression size of the algorithm is bounded.
Nevertheless, for the best set of parameters that we found, a simpler, more intuitive proof was possible, which is presented in the next subsection.
\end{remark}

Figure~\ref{fig:compare-strong} compares the different bounds from Theorem~\ref{thm:bound-strong}.
As we can see, the bound in item 2) is significantly better than the one in item 1); in fact, the former is tight for so-called ``fully-supported'' problems~\cite[Definition~2.5]{calafiore2010random}.
The bound in item 3) is much closer to the one in item 2) and does not require the nondegeneracy assumption.
In the next subsection, we provide a new compression-based confidence bound that is better than the one in item 3) and does not require the nondegeneracy assumption.
The proof is also elementary.\footnote{It does not rely on advanced mathematical tools such as Hausdorff moment problems as in \cite[Theorem~1]{campi2008theexact}, or duality in semi-infinite optimization as in \cite[Theorem~1]{garatti2021therisk}.}

\begin{figure}
    \centering
    \begin{tikzpicture}
        \begin{axis}[legend pos=south east,grid=major,xlabel={$d$},ylabel={$\epsilon$},xtick={0,100,...,500},minor x tick num=4,ytick={0,0.2,...,1},minor y tick num=4,grid=both,remember picture]
            \addplot+[mark=none,line width=1pt] table[x index=0,y index=1] {data_strong.txt};
            \addlegendentry{Theorem~\ref{thm:bound-strong}, 1)}
            \addplot+[mark=none,line width=1pt] table[x index=0,y index=2] {data_strong.txt};
            \addlegendentry{Theorem~\ref{thm:bound-strong}, 2)}
            \addplot+[mark=none,line width=1pt] table[x index=0,y index=3] {data_strong.txt};
            \addlegendentry{Theorem~\ref{thm:bound-strong}, 3)}
            \addplot+[mark=none,line width=1pt] table[x index=0,y index=4] {data_strong.txt};
            \addlegendentry{Theorem~\ref{thm:our-bound-strong}}
            \coordinate (inset) at (rel axis cs:0.01,0.99);
            \draw (axis cs:50,0.1) rectangle (axis cs:150,0.38);
            \coordinate (frame-nw) at (axis cs:50,0.38);
            \coordinate (frame-ne) at (axis cs:150,0.38);
        \end{axis}
        \begin{axis}[at={(inset)},anchor={outer north west},width=2.4cm,grid=major,xmin=50,xmax=150,ymin=0.1,ymax=0.38,axis background/.style={fill=white},ticks=none,scale only axis=true,
                     xtick={60,80,...,140},ytick={0.12,0.16,...,0.36},grid=both,remember picture]
            \addplot+[mark=none,line width=1pt] table[x index=0,y index=1] {data_strong.txt};
            \addplot+[mark=none,line width=1pt] table[x index=0,y index=2] {data_strong.txt};
            \addplot+[mark=none,line width=1pt] table[x index=0,y index=3] {data_strong.txt};
            \addplot+[mark=none,line width=1pt] table[x index=0,y index=4] {data_strong.txt};
            \coordinate (inset-sw) at (rel axis cs:0,0);
            \coordinate (inset-se) at (rel axis cs:1,0);
        \end{axis}
        \draw (frame-ne) -- (inset-se);
        \draw (frame-nw) -- (inset-sw);
    \end{tikzpicture}
    \caption{The curves show the value of $\epsilon$ so that $q(N,\epsilon)=0.05$ for $N=500$ for different values of $d$.
    Lower values of $\epsilon$ indicate better performance.
    We see that our bound (Theorem~\ref{thm:our-bound-strong}) is very close to the bound in item 2) without requiring the nondegeneracy assumption.
    It is also slightly better than the one in item 3) and significantly better than the one in item 1), which all have the same assumptions.}
    \label{fig:compare-strong}
\end{figure}

\subsection{Our Bound}\label{ssec:our-bound-strong}

We will prove the following result, which is the first main contribution of this paper:

\begin{theorem}\label{thm:our-bound-strong}
Consider a consistent scenario decision algorithm $\Alg$ satisfying Assumption~\ref{assum:well}.
Let $d\in\Ne$, and assume that $\rho(\Alg)\leq d$.
A confidence bound for $\Alg$ is given by
\[
q(N,\epsilon)=\binom{N}{d}\min_{m=d,\ldots,N}\binom{m}{d}^{-1}(1-\epsilon)^{N-m}
\]
for all $N\in\Ne_{\geq d}$ and $\epsilon\in(0,1]$.
\end{theorem}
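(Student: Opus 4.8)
The plan is to establish, for each fixed $m\in\{d,\ldots,N\}$, the single-$m$ estimate
\[
\Prob^N\!\left[\left\{\vz\in Z^N : V_\Prob(\Alg(\vz))>\epsilon\right\}\right] \leq \binom{N}{d}\binom{m}{d}^{-1}(1-\epsilon)^{N-m},
\]
and then to take the minimum over $m$, which immediately gives the claimed bound. Write $B=\{\vz\in Z^N: V_\Prob(\Alg(\vz))>\epsilon\}$ for the ``bad'' event, and let $\vz_S$ denote the subtuple of $\vz$ indexed by a set $S$. A preliminary reduction I would record first is that every $\vz\in Z^N$ (with $N\geq d$) admits a compression set of size \emph{exactly} $d$: since $\rho(\Alg)\leq d$ gives some $I\in\kappa(\vz)$ with $\lvert I\rvert\leq d$, and $\Alg(\vz_I)=\Alg(\vz)$ is consistent, $\Alg(\vz_I)$ satisfies every $z_j$; hence by stability and permutation invariance adjoining any index $j\notin I$ leaves the output unchanged, so $I\cup\{j\}$ is again a compression set, and iterating pads $I$ up to size $d$.

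The core of the argument is a two-sided estimate of a single auxiliary event. Draw $\vz\sim\Prob^N$ and, independently, a uniformly random $S\in\binom{[N]}{m}$, and set
\[
G=\left\{(\vz,S): \Alg(\vz_S)\in\textstyle\bigcap_{i=1}^N z_i \ \text{ and }\ V_\Prob(\Alg(\vz_S))>\epsilon\right\}.
\]
For the lower bound on $\Prob[G]$ I would fix $\vz\in B$ together with a size-$d$ compression set $I^\ast$. For every $S\supseteq I^\ast$ (there are $\binom{N-d}{m-d}$ such $S$) the decision $\Alg(\vz_{I^\ast})=\Alg(\vz)$ satisfies all constraints indexed by $S$, so stability forces $\Alg(\vz_S)=\Alg(\vz)$, which is consistent with all of $\vz$ and has risk $>\epsilon$; hence $(\vz,S)\in G$. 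This yields $\Prob[G]\geq\Prob^N[B]\,\binom{N-d}{m-d}\big/\binom{N}{m}$.

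For the matching upper bound, note that by exchangeability of the i.i.d.\ samples it suffices to bound the conditional probability given $S=[m]$. Conditioning further on $z_1,\ldots,z_m$, the decision $x=\Alg(z_1,\ldots,z_m)$ is fixed, and membership in $G$ requires $x$ to satisfy the fresh i.i.d.\ constraints $z_{m+1},\ldots,z_N$; on the event $\{V_\Prob(x)>\epsilon\}$ this has conditional probability $(1-V_\Prob(x))^{N-m}<(1-\epsilon)^{N-m}$, so taking expectations gives $\Prob[G]\leq(1-\epsilon)^{N-m}$. Combining the two estimates and using the identity $\binom{N}{d}\binom{m}{d}^{-1}=\binom{N}{m}\big/\binom{N-d}{m-d}$ produces the single-$m$ bound, and minimizing over $m\in\{d,\ldots,N\}$ completes the proof.

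The step I expect to be the main obstacle is the identity $\Alg(\vz_S)=\Alg(\vz)$ for every $S\supseteq I^\ast$, since this is precisely where both hypotheses are indispensable: \emph{consistency} guarantees that the compressed decision satisfies the freshly adjoined constraints, and \emph{stability} (with permutation invariance) guarantees that satisfied constraints can be adjoined without altering the output. Obtaining the clean count $\binom{N-d}{m-d}$ of admissible supersets likewise hinges on the padding reduction to compression sets of size exactly $d$; with these in hand, the remaining estimates are elementary conditioning and counting.
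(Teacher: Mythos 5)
Your proof is correct, and it takes a genuinely different route from the paper's. The paper first reduces, via a nontrivial augmentation construction borrowed from Garatti--Campi, to the case where every tuple has a \emph{unique} size-$d$ compression set; it then shows that each candidate set $I\in\binom{[m]}{d}$ is the compression set with probability exactly $\binom{m}{d}^{-1}$ (a partition argument), bounds $\Prob^N(\{I\in\kappa(\vz),\,V_\Prob(\Alg(\vz))>\epsilon\})\leq\binom{m}{d}^{-1}(1-\epsilon)^{N-m}$ by a Floyd--Warmuth-style conditioning for each fixed $I$, and finishes with a union bound over the $\binom{N}{d}$ candidates. You instead symmetrize over a uniformly random $m$-subset $S$ and double-count the auxiliary event $G$: your lower bound counts, for each bad sample, the $\binom{N-d}{m-d}$ supersets of a single size-$d$ compression set (your padding step and the identity $\Alg(\vz_S)=\Alg(\vz_{I^\ast})$ for $S\supseteq I^\ast$ are exactly the paper's Lemmas on padding and sub-tuple compression), while your upper bound is the same fresh-sample conditioning; the binomial identity $\binom{N}{m}\big/\binom{N-d}{m-d}=\binom{N}{d}\binom{m}{d}^{-1}$ then recovers the stated constant. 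What your route buys is that it needs only \emph{one} compression set per sample, so it avoids the uniqueness assumption and the associated augmentation of the decision space altogether, making the argument more self-contained; what the paper's route buys is an explicit per-candidate-set bound that directly parallels the classical Floyd--Warmuth lemma and makes transparent where the improvement over the $\binom{N}{d}(1-\epsilon)^{N-d}$ bound comes from. The only points worth tightening in a final write-up are the implicit measurable selection of $I^\ast(\vz)$ (the paper glosses over the analogous issue) and a sentence justifying that $\Prob^N[(\vz,S)\in G]$ is independent of $S$ by exchangeability of $\Prob^N$ together with the permutation invariance of $\Alg$.
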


Figure~\ref{fig:compare-strong} shows the bound in Theorem~\ref{thm:our-bound-strong}.
As we can see, this bound is better than the one in item 3) and requires the same assumptions.
It is also very close to the one in item 2).

\subsection{Proof of Theorem~\ref{thm:our-bound-strong}}

In the proof, we make the assumption that for each $\vz\in Z^{\geq d}$, there is a \emph{single} compression set of size $d$ for $\vz$.
This assumption is made \emph{without loss of generality} because one can always define an augmented decision set, augmented constraint set, and augmented scenario decision algorithm, for which the assumption holds and such that the augmented scenario decision matches with the scenario decision after projecting on the original decision set.
For such a construction, we refer the reader to~\cite[first paragraph of \S2]{garatti2021therisk}.\footnote{This construction requires only the stability property in Assumption~\ref{assum:well}.}

\begin{assumption}\label{assum:single}
The scenario decision algorithm $\Alg$ satisfies that for all $\vz\in Z^{\geq d}$, $\lvert\kappa(\vz)\rvert=1$.
\end{assumption}

A consequence of Assumption~\ref{assum:single} is the following:

\begin{lemma}\label{lem:rand-extract}
Let $\Alg$ be as in Theorem~\ref{thm:our-bound-strong} and satisfy Assumption~\ref{assum:single}.
Let $\Prob$ be a probability measure on $Z$.
For any $m\in\Ne_{\geq d}$ and $I\in\binom{[m]}{d}$, it holds that $\Prob^m(\{\vz\in Z^m : I\in\kappa(\vz)\})=\binom{m}{d}^{-1}$.
\end{lemma}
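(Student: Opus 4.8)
The plan is to exploit the symmetry created by Assumption~\ref{assum:single}. Fix $m \geq d$ and a particular subset $I \in \binom{[m]}{d}$. The key observation is that for each tuple $\vz \in Z^m$, Assumption~\ref{assum:single} guarantees there is a \emph{unique} compression set of size $d$, call it $J(\vz) \in \binom{[m]}{d}$. Thus the sets $\{\vz : J(\vz) = I\}$, ranging over $I \in \binom{[m]}{d}$, form a partition of $Z^m$ into $\binom{m}{d}$ disjoint blocks. The event in the lemma, $\{\vz : I \in \kappa(\vz)\}$, is exactly the block $\{\vz : J(\vz) = I\}$ (since $I$ has size $d$ and there is only one compression set of that size). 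The claim is then that each of these blocks has the same measure under $\Prob^m$, which forces each to equal $\binom{m}{d}^{-1}$.

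To establish that all blocks have equal measure, I would use the permutation invariance from Assumption~\ref{assum:well} together with the fact that $\Prob^m$ is a product measure. First I would note that since $\Alg$ is permutation invariant, the unique minimal compression set transforms equivariantly under permutations: if $\sigma$ is a permutation of $[m]$ and I write $\sigma \cdot \vz = (z_{\sigma^{-1}(1)}, \ldots, z_{\sigma^{-1}(m)})$, then $J(\sigma \cdot \vz) = \sigma(J(\vz))$. This is because applying $\sigma$ merely relabels the coordinates, so the unique size-$d$ subtuple producing the same decision gets relabeled accordingly; uniqueness is what lets me conclude this cleanly rather than only up to some ambiguity. Second, because $\Prob^m$ is the $m$-fold product measure, it is invariant under any coordinate permutation: $\Prob^m(\sigma \cdot A) = \Prob^m(A)$ for any measurable $A$.

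Combining these two facts, for any two subsets $I, I' \in \binom{[m]}{d}$ I can pick a permutation $\sigma$ with $\sigma(I) = I'$, and then the equivariance gives $\sigma \cdot \{\vz : J(\vz) = I\} = \{\vz : J(\vz) = I'\}$, while permutation-invariance of the product measure gives that these two blocks have equal measure. Hence all $\binom{m}{d}$ blocks share a common measure $p$. Since they partition $Z^m$ (every tuple of length $\geq d$ has a compression set of size at most $d$, and by the stability-based augmentation one may take it to have size exactly $d$), their measures sum to $1$, giving $\binom{m}{d}\, p = 1$ and therefore $p = \binom{m}{d}^{-1}$, as claimed.

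I expect the main subtlety to be the measurability and the careful justification of the equivariance relation $J(\sigma \cdot \vz) = \sigma(J(\vz))$, which hinges essentially on uniqueness: without Assumption~\ref{assum:single} one would only know that $\sigma(J(\vz))$ is \emph{a} compression set of $\sigma \cdot \vz$, not \emph{the} one, and the partition argument would collapse. The reliance on uniqueness is exactly why the lemma is stated under Assumption~\ref{assum:single}, and why that assumption (justified as without loss of generality via the augmentation construction) is doing the real work here.
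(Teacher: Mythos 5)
Your proposal is correct and follows essentially the same route as the paper's proof: permutation invariance of $\Alg$ (plus invariance of the product measure under coordinate permutations) gives that all events $\{\vz : I\in\kappa(\vz)\}$ have equal measure, while Assumption~\ref{assum:single} makes them disjoint, so each has measure $\binom{m}{d}^{-1}$. You merely spell out the equivariance and covering steps that the paper leaves implicit.
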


\begin{proof}
Fix $m\in\Ne_{\geq d}$.
For each $I\in\binom{[m]}{d}$, define $S_I=\{\vz\in Z^m : I\in\kappa(\vz)\}$.
The permutation invariance of $\Alg$ implies that $\Prob^m(S_I)$ is independent of $I$.
Furthermore, by Assumption~\ref{assum:single}, for any $I_1\neq I_2$, $S_{I_1}\cap S_{I_2}=\emptyset$.
Hence, $\Prob^m(S_I)=\binom{m}{d}^{-1}$.
\end{proof}

Next, we leverage the bound in Theorem~\ref{thm:bound-strong}, item 1), and extend it by incorporating the probability that a given set of $d$ elements is extracted from a set of $m$ samples (Lemma~\ref{lem:key-lemma-strong}):

\begin{lemma}\label{lem:claim-for-key-lem-strong}
Let $\Alg$ be as in Theorem~\ref{thm:our-bound-strong} and satisfy Assumption~\ref{assum:single}.
Let $N\in\Ne_{\geq d}$, $I\in\binom{[N]}{d}$ and $m\in\Ne_{\geq d}$.
Let $(j_1,\ldots,j_m)\in[N]^m$ be such that there is $I'\coloneqq\{i_1,\ldots,i_d\}\subseteq[m]$ satisfying $I=\{j_{i_1},\ldots,j_{i_d}\}$.
It holds that for all $\vz\coloneqq(z_1,\ldots,z_N)\in Z^N$, if $I\in\kappa(\vz)$, then $I'\in\kappa(\vz')$, where $\vz'=(z_{j_1},\ldots,z_{j_m})$.
\end{lemma}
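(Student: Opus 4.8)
The plan is to show that both sides of the defining identity for $I'\in\kappa(\vz')$ reduce to the single value $\Alg(\vz)$. The two tools are permutation invariance (to match a subtuple of $\vz'$ with the subtuple of $\vz$ indexed by $I$) and the combination of consistency and stability (to rebuild all of $\vz'$ from that subtuple without changing the output of $\Alg$).

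First I would record a combinatorial preliminary: the indices $i_1,\ldots,i_d$ are necessarily distinct, so $\lvert I'\rvert=d$ and $\ell\mapsto j_{i_\ell}$ is a bijection from $\{1,\ldots,d\}$ onto $I$. This is forced because $I=\{j_{i_1},\ldots,j_{i_d}\}$ belongs to $\binom{[N]}{d}$ and hence has exactly $d$ elements, which is impossible if two of the $j_{i_\ell}$ coincide. Consequently the subtuple of $\vz'$ indexed by $I'$, namely $(z_{j_{i_1}},\ldots,z_{j_{i_d}})$, is a reordering of $(z_k)_{k\in I}$. Permutation invariance then gives $\Alg(z_{j_{i_1}},\ldots,z_{j_{i_d}})=\Alg((z_k)_{k\in I})$, and since $I\in\kappa(\vz)$ the right-hand side equals $\Alg(\vz)$. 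This settles the ``compressed'' side of the desired equality.

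Next I would prove $\Alg(\vz')=\Alg(\vz)$ by growing the subtuple $(z_{j_{i_1}},\ldots,z_{j_{i_d}})$ back up to all of $\vz'$ one constraint at a time. The key observation is that consistency makes $\Alg(\vz)$ satisfy every constraint appearing in $\vz'$: each entry $z_{j_p}$ of $\vz'$ has $j_p\in[N]$, so $z_{j_p}$ is one of $z_1,\ldots,z_N$ and therefore $\Alg(\vz)\in z_{j_p}$. Starting from $(z_{j_{i_1}},\ldots,z_{j_{i_d}})$, whose output is already $\Alg(\vz)$ by the previous paragraph, I would append the remaining entries of $\vz'$ (those at the positions in $[m]\setminus I'$) one after another; at each step the current output is $\Alg(\vz)$ by the induction hypothesis and it satisfies the newly appended constraint, so stability keeps the output equal to $\Alg(\vz)$. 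After all entries have been appended, the resulting tuple is a permutation of $\vz'$, so permutation invariance yields $\Alg(\vz')=\Alg(\vz)$.

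Combining the two computations gives $\Alg(z_{j_{i_1}},\ldots,z_{j_{i_d}})=\Alg(\vz)=\Alg(\vz')$, which is precisely the statement $I'\in\kappa(\vz')$. I expect the only delicate point to be the bookkeeping in the stability induction, namely ensuring that each appended entry is genuinely one of the original constraints $z_1,\ldots,z_N$ (so that consistency supplies the hypothesis needed by stability) and that the final reordering is absorbed by permutation invariance. Note that this argument uses only the consistency, stability and permutation-invariance properties already in force, and not Assumption~\ref{assum:single}.
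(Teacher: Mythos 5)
Your proof is correct and follows essentially the same route as the paper's: identify $(z_{j_{i_1}},\ldots,z_{j_{i_d}})$ with the subtuple of $\vz$ indexed by $I$ via permutation invariance, use consistency of $\Alg$ on the full tuple $\vz$ to see that its output satisfies every entry of $\vz'$, and then invoke stability (plus a final permutation) to grow back up to $\vz'$ without changing the output. You merely make explicit the one-constraint-at-a-time induction and the distinctness of the $j_{i_\ell}$, which the paper leaves implicit, and you correctly observe that Assumption~\ref{assum:single} is not needed here.
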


\begin{proof}
Let $\vz\coloneqq(z_1,\ldots,z_N)\in Z^N$ be such that $I\in\kappa(\vz)$.
Let $I'\coloneqq\{i_1,\ldots,i_d\}\subseteq[m]$ be such that $I=\{j_{i_1},\ldots,j_{i_d}\}$.
Let $\vz''=(z_{j_{i_1}},\ldots,z_{j_{i_d}})$.
By definition of $\kappa$, we just need to show that $\Alg(\vz'')=\Alg(\vz')$.
Therefore, first observe that $\Alg(\vz'')=\Alg(\vz)$ since $I\in\kappa(\vz)$.
Then, since $\Alg$ is consistent, it follows that $\Alg(\vz'')\in\bigcap_{k=1}^N z_k$.
Hence, since $\Alg$ is stable and permutation invariant, we deduce that $\Alg(\vz')=\Alg(\vz'')$.
\end{proof}

\begin{lemma}\label{lem:key-lemma-strong}
Let $\Alg$ be as in Theorem~\ref{thm:our-bound-strong} and satisfy Assumption~\ref{assum:single}.
Let $N\in\Ne_{\geq d}$, $I\in\binom{[N]}{d}$, and $\epsilon\in(0,1]$.
For all $m\in\Ne\cap[d,N]$, it holds that $\Prob^N(\{\vz\in Z^N : I\in\kappa(\vz),\,V_\Prob(\Alg(\vz))>\epsilon\})\leq \binom{m}{d}^{-1} (1-\epsilon)^{N-m}$.
\end{lemma}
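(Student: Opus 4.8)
The plan is to reduce the $N$-sample event to an $m$-sample compression event multiplied by an independent ``all remaining constraints are satisfied'' event, and then to bound the two factors separately using Lemma~\ref{lem:rand-extract} and the consistency hypothesis. Because $\Alg$ is permutation invariant (Assumption~\ref{assum:well}) and $\Prob^N$ is exchangeable, the probability to be bounded does not depend on the particular $I\in\binom{[N]}{d}$, so I would first assume without loss of generality that $I=\{1,\ldots,d\}$ and single out the first $m$ coordinates (note $I\subseteq[m]$ since $m\geq d$).

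The first step is to establish the inclusion of $\{\vz\in Z^N : I\in\kappa(\vz),\,V_\Prob(\Alg(\vz))>\epsilon\}$ into the event $E'$ defined by: (i) $I\in\kappa(z_1,\ldots,z_m)$; (ii) $V_\Prob(\Alg(z_1,\ldots,z_m))>\epsilon$; and (iii) $\Alg(z_1,\ldots,z_m)\in z_i$ for every $i\in\{m+1,\ldots,N\}$. Applying Lemma~\ref{lem:claim-for-key-lem-strong} with $(j_1,\ldots,j_m)=(1,\ldots,m)$ and $I'=I$ yields simultaneously (i) and the decision identity $\Alg(z_1,\ldots,z_m)=\Alg(\vz)$; this identity transfers the risk condition unchanged, giving (ii), and (iii) then follows from consistency, since $\Alg(\vz)\in\bigcap_{i=1}^N z_i$ implies $\Alg(\vz)\in z_i$ for $i>m$.

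The second step is to bound $\Prob^N(E')$ by conditioning on the first $m$ coordinates. Since the coordinates are i.i.d., for fixed $(z_1,\ldots,z_m)$ the conditional probability of (iii) equals $(1-V_\Prob(x))^{N-m}$ with $x=\Alg(z_1,\ldots,z_m)$, and on the sub-event where (i) and (ii) hold this is at most $(1-\epsilon)^{N-m}$. Writing $\Prob^N(E')$ as an integral over $(z_1,\ldots,z_m)$ of this conditional probability restricted to $A\coloneqq\{(z_1,\ldots,z_m)\in Z^m : I\in\kappa(z_1,\ldots,z_m),\,V_\Prob(\Alg(z_1,\ldots,z_m))>\epsilon\}$ (a routine Fubini argument on the product measure) gives $\Prob^N(E')\leq(1-\epsilon)^{N-m}\,\Prob^m(A)$. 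Finally, since $A\subseteq\{(z_1,\ldots,z_m):I\in\kappa(z_1,\ldots,z_m)\}$, Lemma~\ref{lem:rand-extract} bounds $\Prob^m(A)\leq\binom{m}{d}^{-1}$, and multiplying the two factors yields the claimed inequality.

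I expect the main obstacle to be conceptual rather than computational: recognizing the correct decomposition, namely extracting exactly $m$ coordinates so that the compression-set probability supplies the improving factor $\binom{m}{d}^{-1}$ of Lemma~\ref{lem:rand-extract} while the remaining $N-m$ independent coordinates supply $(1-\epsilon)^{N-m}$. The only technical care required is the Fubini/conditioning step and the verification that Lemma~\ref{lem:claim-for-key-lem-strong} delivers both the compression property $I\in\kappa(z_1,\ldots,z_m)$ and the decision identity $\Alg(z_1,\ldots,z_m)=\Alg(\vz)$; everything else is bookkeeping. Note that the case $m=d$ recovers the per-subset estimate $(1-\epsilon)^{N-d}$ underlying Theorem~\ref{thm:bound-strong}, item~1, which is a useful sanity check.
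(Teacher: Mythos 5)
Your proof is correct and follows essentially the same route as the paper's: reduce to the first $m$ coordinates via permutation invariance, use Lemma~\ref{lem:claim-for-key-lem-strong} together with consistency to embed the event into ``compression on $m$ samples'' times ``the remaining $N-m$ samples are satisfied'', then apply Fubini and Lemma~\ref{lem:rand-extract} to get the factors $(1-\epsilon)^{N-m}$ and $\binom{m}{d}^{-1}$. The only cosmetic difference is that the paper carries an explicit permutation $(j_1,\ldots,j_N)$ instead of normalizing $I=\{1,\ldots,d\}$, and your observation that the decision identity $\Alg(z_1,\ldots,z_m)=\Alg(\vz)$ follows from combining $I\in\kappa(\vz)$, $I'\in\kappa(\vz')$ and permutation invariance is exactly the step the paper uses implicitly.
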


\begin{proof}
Let $m\in\Ne\cap[d,N]$, $(j_1,\ldots,j_N)$ be a permutation of $[N]$, and $I'\coloneqq\{i_1,\ldots,i_d\}\subseteq[m]$ be such that $I=\{j_{i_1},\ldots,j_{i_d}\}$.
Let $T'=\{\vz\in Z^m : I'\in\kappa(\vz),\, V_\Prob(\Alg(\vz))>\epsilon\}$.
By Lemma~\ref{lem:claim-for-key-lem-strong} and the consistency of $\Alg$, it holds that for all $\vz\in Z^m$, if $I\in\kappa(\vz)$, then $\vz'\in T'$ and $\Alg(\vz')\in\bigcap_{k=m+1}^N z_k$, where $\vz$ is a shorthand notation for $(z_1,\ldots,z_N)$, and $\vz'$ for $(z_{j_1},\ldots,z_{j_m})$.

Given $x\in X$, let $R(x)=\{(z_1,\ldots,z_{N-m})\in Z^{N-m} : x\in\bigcap_{k=1}^{N-m}z_k\}$.
Note that $\Prob^{N-m}(R(x)) = (1-V_\Prob(x))^{N-m}$.
Let $T=\{\vz\in Z^N : I\in\kappa(\vz),\,V_\Prob(\Alg(\vz))>\epsilon\}$.
It holds that
\[
\Prob^N(T) = \Exp_{\vz\sim \Prob^N}[\boldsymbol{1}_{R(\Alg(\vz'))}(\vz'') \, \boldsymbol{1}_{T'}(\vz')],
\]
where $\vz''$ is a shorthand for $(z_{j_{m+1}},\ldots,z_{j_N})$.
It follows that
\[
\Prob^N(T) \leq (1-\epsilon)^{N-m} \, \Prob^m(T') \leq (1-\epsilon)^{N-m} \binom{m}{d}^{-1},
\]
where we used Lemma~\ref{lem:rand-extract} to obtain the second inequality.
\end{proof}

We are now able to conclude the proof of Theorem~\ref{thm:our-bound-strong}:

\begin{proof}[Proof of Theorem~\ref{thm:our-bound-strong}]
Fix $N\in\Ne_{\geq d}$, $\Prob$ a probability distribution on $Z$, and $\epsilon\in(0,1]$.
Let $T=\{\vz\in Z^N : V_\Prob(\Alg(\vz))>\epsilon\}$.
For each $I\in\binom{[N]}{d}$, let $T_I=\{\vz\in T : I\in\kappa(\vz)\}$.
It holds that $T=\bigcup_{I\in\binom{[N]}{d}} T_I$.
Hence, by Lemma~\ref{lem:key-lemma-strong} and the union bound, we obtain that for every $m\in\Ne\cap[d,N]$,
\[
\Prob^N(T) \leq \sum_{I\in\binom{[N]}{d}} \Prob^N(T_I) \leq \binom{N}{d}\binom{m}{d}^{-1} (1-\epsilon)^{N-m}.
\]
This concludes the proof of the theorem.
\end{proof}

\begin{remark}
It is interesting to note that the proof of Lemma~\ref{lem:key-lemma-strong} uses the same idea as the proof of~\cite[Theorem~6]{floyd1995sample}, or~\cite[Theorem~2]{margellos2015ontheconnection}.
It is Assumption~\ref{assum:single}, which can be made without loss of generality for permutation invariant algorithms, and the stability property that allow (via Lemmas~\ref{lem:rand-extract} and~\ref{lem:claim-for-key-lem-strong}) to obtain the bound in Theorem~\ref{thm:our-bound-strong}, which improves upon them.
\end{remark}

%%%%%%%%%%%%%%%%%%%%%%%%%%%%%%%%%%%%%%%%%%%%%%%%%%%%%%%%%%%%%%%%%%%%%%%%%%%%%%%%%%%%%%%%%%%%%%%%%%%%%%%%%%%%%%%%%%%%%%%%%%%%%%%%%%%%%%%%%%%%%%%%%%%%%%%%%%
\section{Relaxing the Consistency Assumption}\label{sec:weak-consistent}

In some applications, consistency can be too restrictive: it is sometimes preferable to discard a few sampled constraints before passing them to a consistent scenario decision algorithm if this can lead to scenario decisions with better performances (e.g., lower cost), especially when a lot of constraints are sampled.
The algorithm that discards a fixed number of constraints and keeps the other ones is called a selection algorithm:

\begin{definition}\label{def:sel-algo}
A function $\sigma:Z^*\to Z^*$ is a \emph{selection algorithm} with \emph{discarding size} $r\in\Ne$ if for every $\vz\coloneqq(z_1,\ldots,z_N)\in Z^*$, there are integers $1\leq i_1<\ldots<i_M\leq N$, with $M\geq N-r$, such that $\sigma(\vz)=(z_{i_1},\ldots,z_{i_M})$.
\end{definition}

The resulting scenario decision algorithm is called a sample-and-discard decision algorithm:

\begin{definition}\label{def:weak-algo}
A scenario decision algorithm $\Blg$ is a \emph{sample-and-discard} decision algorithm with \emph{discarding size} $r\in\Ne$ if there is (i) a consistent scenario decision algorithm $\Alg$ and (ii) a selection algorithm $\sigma$ with discarding size $r$, such that $\Blg=\Alg\circ\sigma$.
The pair $(\Alg,\sigma)$ is called a \emph{decomposition} of $\Blg$.
\end{definition}

\begin{example}\label{exa:path-algo-weak}
Continuing Examples~\ref{exa:path}--\ref{exa:path-algo}, a sample-and-discard decision algorithm $\Blg$ can be defined as follows: (i) $\Alg$ is as in Example~\ref{exa:path-algo}, and (ii) $\sigma$ selects among $(z_1,\ldots,z_N)\in Z^*$, with $z_i=X_{a_i}$, the $N-r$ constraints for which $\lVert a_i\rVert$ are the lowest (using a tie-breaking rule in case of ties).
\end{example}

\subsection{Previous Bounds in the Literature}

We review confidence bounds available in the literature for sample-and-discard decision algorithms:

\begin{theorem}\label{thm:bound-weak}
Consider a sample-and-discard decision algorithm $\Blg$ with discarding size $r\in\Ne$ and decomposition $(\Alg,\sigma)$, where $\Alg$ satisfies Assumption~\ref{assum:well}.
Let $d\in\Ne$, and assume that $\rho(\Alg)\leq d$.
Then, confidence bounds for $\Blg$ are given by
\begin{enumerate}
    \item $q(N,\epsilon)=\binom{N}{d}\sum_{i=0}^r\binom{N-d}{i}\epsilon^i(1-\epsilon)^{N-d-i}$, $N\in\Ne_{\geq r+d}$ and $\epsilon\in(0,1]$; see~\cite[Theorem~4]{margellos2015ontheconnection};
    \item $q(N,\epsilon)=\binom{r+d-1}{r}\sum_{i=0}^{r+d-1}\binom{N}{i}\epsilon^i(1-\epsilon)^{N-i}$, $N\in\Ne_{\geq r+d-1}$ and $\epsilon\in(0,1]$, under the ``nondegeneracy'' assumption~\cite[Assumption~2.1]{campi2011asamplinganddiscarding} and the ``conformity'' assumption~\cite[Assumption~2.2]{campi2011asamplinganddiscarding}; see~\cite[Theorem~2.1]{campi2011asamplinganddiscarding};
    \item $q(N,\epsilon)=\sum_{i=0}^{r+d-1}\binom{N}{i}\epsilon^i(1-\epsilon)^{N-i}$, $N\in\Ne_{\geq r+d-1}$ and $\epsilon\in(0,1]$, under the ``sequential nondegeneracy'' assumption \cite[Assumption~3]{romao2023ontheexact} and the form~\cite[(21)]{romao2023ontheexact} for $\Alg$; see~\cite[Theorem~4]{romao2023ontheexact}.
\end{enumerate}\vskip0pt
\end{theorem}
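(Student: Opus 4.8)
The plan is to treat the three items separately, since only item~1) is distribution-free while the other two inherit both their bounds and their extra hypotheses verbatim from the cited works. For item~1) I would mirror the compression-based union-bound argument developed in Section~\ref{sec:strong-consistent}, adapting it to account for the $r$ discarded constraints. The key structural observation is that, writing $\Blg=\Alg\circ\sigma$ with $\Alg$ consistent and $\rho(\Alg)\leq d$, the decision $\Blg(\vz)$ is determined by at most $d$ of the \emph{retained} constraints (a compression set of $\Alg$ applied to $\sigma(\vz)$), it satisfies \emph{every} retained constraint by consistency of $\Alg$, and it can violate only the (at most $r$) \emph{discarded} constraints. Hence, among the $N$ sampled constraints, $\Blg(\vz)$ violates at most $r$ of them, and these violated ones are disjoint from the $d$ that pin down the decision.

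Concretely, I would first reduce, exactly as in Assumption~\ref{assum:single} and the construction of~\cite{garatti2021therisk}, to the case where the size-$d$ compression set of $\Alg$ is unique, so that the index sets in $\binom{[N]}{d}$ can drive a union bound without double counting. Then I fix $I\in\binom{[N]}{d}$ intended to be the compression set of $\Alg$ for the retained subtuple, and I condition on the realized decision $x=\Blg(\vz)$ with $V_\Prob(x)>\epsilon$. The remaining $N-d$ constraints are i.i.d.\ and, on the event in question, at most $r$ of them are violated (these are precisely the discarded ones), the rest being satisfied. This is the analog of Lemma~\ref{lem:key-lemma-strong}, except that the factor $(1-\epsilon)^{N-d}$ is replaced by the binomial lower tail $\sum_{i=0}^r\binom{N-d}{i}V_\Prob(x)^i(1-V_\Prob(x))^{N-d-i}$. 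Using that the partial sum $\sum_{i=0}^r\binom{n}{i}v^i(1-v)^{n-i}=\Prob[\mathrm{Bin}(n,v)\leq r]$ is nonincreasing in $v$ and that $V_\Prob(x)>\epsilon$, I would bound this by $\sum_{i=0}^r\binom{N-d}{i}\epsilon^i(1-\epsilon)^{N-d-i}$, and a union bound over the $\binom{N}{d}$ choices of $I$ then yields item~1).

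For items~2) and~3) I would not reprove anything from scratch: the stated bounds and their hypotheses (the nondegeneracy and conformity assumptions for item~2), and the sequential-nondegeneracy assumption together with the cascade form of $\Alg$ for item~3)) are exactly those under which the cited theorems are established, so these follow by a direct appeal to \cite[Theorem~2.1]{campi2011asamplinganddiscarding} and \cite[Theorem~4]{romao2023ontheexact} respectively. The real obstacle lies entirely in item~1): making the informal decomposition ``the violated constraints are the discarded ones, the satisfied constraints include the compression set'' rigorous at the level of the product measure $\Prob^N$. In particular, one must verify that the composition $\Alg\circ\sigma$ still admits the single-compression-set reduction (so the union bound is legitimate) and that the permutation invariance and stability of $\Alg$ survive precomposition with $\sigma$; the selection step could a priori reintroduce ties or entangle the $d$ conditioning constraints with the $N-d$ remaining ones, so an argument in the spirit of Lemma~\ref{lem:claim-for-key-lem-strong} is needed to certify that, conditioned on $I$ being the compression set, the other coordinates remain genuinely i.i.d.\ and each contributes an independent violation probability $V_\Prob(x)$.
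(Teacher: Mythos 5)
This theorem is a literature survey: the paper offers no proof of its own, and its ``proof'' consists entirely of the citations embedded in the statement, namely \cite[Theorem~4]{margellos2015ontheconnection} for item~1), \cite[Theorem~2.1]{campi2011asamplinganddiscarding} for item~2), and \cite[Theorem~4]{romao2023ontheexact} for item~3). Your handling of items~2) and~3) therefore coincides exactly with the paper. For item~1) you go further and reconstruct the argument of \cite{margellos2015ontheconnection}, and your reconstruction is sound: fix $I\in\binom{[N]}{d}$, note that on the event that $I$ is a compression set for the retained subtuple the decision $x_I=\Alg(z_I)$ depends only on the $d$ coordinates in $I$, satisfies all retained constraints by consistency, hence violates at most $r$ of the $N-d$ remaining i.i.d.\ constraints, and bound that event by the binomial lower tail $\sum_{i=0}^r\binom{N-d}{i}\epsilon^i(1-\epsilon)^{N-d-i}$ using monotonicity in $V_\Prob(x_I)>\epsilon$; then union bound over $\binom{N}{d}$ choices of $I$. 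This is indeed the same skeleton the paper itself later reuses in Lemma~\ref{lem:key-lemma-strong}. One correction: the single-compression-set reduction (Assumption~\ref{assum:single}) and the worries in your final paragraph about ``double counting'' and about permutation invariance surviving precomposition with $\sigma$ are not needed for item~1). A union bound over all $I\in\binom{[N]}{d}$ is valid regardless of whether several $I$ realize the compression set---overcounting only loosens an upper bound. Uniqueness becomes essential only when one needs the events $\{I\in\kappa(\vz)\}$ to be \emph{disjoint} so as to extract the exact factor $\binom{m}{d}^{-1}$ via Lemma~\ref{lem:rand-extract}; that is the mechanism behind the paper's new bound in Theorem~\ref{thm:our-bound-strong}, not behind the classical bound of item~1) here.
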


\begin{remark}
Interestingly, \cite[Theorem~4]{margellos2015ontheconnection} assumes a ``conformity'' assumption~\cite[Assumption~4, item 3)]{margellos2015ontheconnection}, although it is not used in the proof.
Therefore, we omit it in item 1).
\end{remark}

\begin{remark}\label{rem:not-pac-weak}
The same comment as in Remark~\ref{rem:not-pac-strong} applies for items 2) and 3).
\end{remark}

Figure~\ref{fig:compare-weak} compares the different bounds from Theorem~\ref{thm:bound-weak}
As we can see, the bound in item 3) is significantly better than those in items 1) and 2).
However, it requires stronger assumptions.
On the other hand, the bounds in items 1) and 2) are more or less identical, despite the fact that item 1) does not require the nondegeneracy assumption.
In the next subsection, we provide a new compression-based confidence bound that is better than the one in items 1) in some regimes and does not require the nondegeneracy or conformity assumption.
The proof is also elementary.

\begin{figure}
    \centering
    \begin{tikzpicture}
        \begin{axis}[legend pos=south east,grid=major,xlabel={$d$},ylabel={$\epsilon$}]
            \addplot+[mark=none,line width=1pt] table[x index=0,y index=2] {data_weak.txt};
            \addlegendentry{Theorem~\ref{thm:bound-weak}, 1)}
            \addplot+[mark=none,line width=1pt] table[x index=0,y index=1] {data_weak.txt};
            \addlegendentry{Theorem~\ref{thm:bound-weak}, 2)}
            \addplot+[mark=none,line width=1pt] table[x index=0,y index=3] {data_weak.txt};
            \addlegendentry{Theorem~\ref{thm:bound-weak}, 3)}
            \addplot+[mark=none,line width=1pt] table[x index=0,y index=4] {data_weak.txt};
            \addlegendentry{Theorem~\ref{thm:our-bound-weak}}
        \end{axis}
    \end{tikzpicture}
    \caption{The curves show the value of $\epsilon$ so that $q(N,\epsilon)=0.05$ for $N=500$ and $r=50$ for different values of $d$.
    Lower values of $\epsilon$ indicate better performance.
    We see that our bound (Theorem~\ref{thm:our-bound-weak}) is between the bounds in items 1) and 2) without requiring the nondegeneracy or conformity assumption.
    The bound in item 3) is significantly better but requires stronger assumptions.}
    \label{fig:compare-weak}
\end{figure}

\subsection{Our Bound}\label{ssec:our-bound-weak}

We will prove the following result, which is the second main contribution of this paper:

\begin{theorem}\label{thm:our-bound-weak}
Consider a sample-and-discard decision algorithm $\Blg$ with discarding size $r\in\Ne$ and decomposition $(\Alg,\sigma)$, where $\Alg$ satisfies Assumption~\ref{assum:well}.
Let $d\in\Ne$, and assume that $\rho(\Alg)\leq d$.
A confidence bound for $\Blg$ is given by
\[
q(N,\epsilon)=\binom{N}{r}\binom{N-r}{d}\min_{m=d,\ldots,N-r}\binom{m}{d}^{-1}(1-\epsilon)^{N-r-m},
\]
for all $N\in\Ne_{\geq r+d}$ and $\epsilon\in(0,1]$.
\end{theorem}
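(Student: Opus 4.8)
The plan is to reduce the sample-and-discard case to the consistent case already established in Theorem~\ref{thm:our-bound-strong}, by showing that an algorithm with discarding size $r$ behaves, for the purpose of the bound, like a consistent algorithm run on $N-r$ ``effective'' samples. As in the proof of Theorem~\ref{thm:our-bound-strong}, I would first invoke the construction used there (see~\cite{garatti2021therisk}) to assume without loss of generality that $\Alg$ satisfies Assumption~\ref{assum:single}, so that every tuple in $Z^{\geq d}$ has a unique compression set of size $d$. For $\vz\in Z^N$, write $D(\vz)$ for the set of discarded indices chosen by $\sigma$, $K(\vz)=[N]\setminus D(\vz)$ for the kept indices (so $|K(\vz)|\geq N-r$), and $I(\vz)=\kappa(z_{K(\vz)})$ for the unique size-$d$ compression set of the kept subtuple; note that $\Blg(\vz)=\Alg(z_{K(\vz)})=\Alg(z_{I(\vz)})$.

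The core step is a lemma analogous to Lemma~\ref{lem:key-lemma-strong}. For disjoint $A,I\subseteq[N]$ with $|A|=r$ and $|I|=d$, and $m\in\Ne\cap[d,N-r]$, let
\[
T_{A,I}=\{\vz\in Z^N : V_\Prob(\Blg(\vz))>\epsilon,\; D(\vz)\subseteq A,\; I(\vz)=I\}.
\]
I would show $\Prob^N(T_{A,I})\leq\binom{m}{d}^{-1}(1-\epsilon)^{N-r-m}$. The key observation is that on $T_{A,I}$ the decision $x=\Blg(\vz)$ depends only on $z_I$, and, since $\Alg$ is consistent, $x$ satisfies every kept constraint; as $D(\vz)\subseteq A$ we have $[N]\setminus A\subseteq K(\vz)$, so $x$ satisfies all constraints indexed by $[N]\setminus A\supseteq I$. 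Using stability and permutation invariance exactly as in Lemma~\ref{lem:claim-for-key-lem-strong}, this forces $\Alg(z_{[N]\setminus A})=\Alg(z_I)=x$, and by Assumption~\ref{assum:single} that $I$ is the unique size-$d$ compression set of $z_{[N]\setminus A}$. Hence $T_{A,I}$ is contained in the event $\{\vz : I=\kappa(z_{[N]\setminus A}),\,V_\Prob(\Alg(z_{[N]\setminus A}))>\epsilon\}$, which constrains only the $N-r$ coordinates outside $A$. Integrating out the free coordinates $z_A$ and relabeling $[N]\setminus A$ as $[N-r]$, this is precisely the event bounded in Lemma~\ref{lem:key-lemma-strong} with $N-r$ samples, which yields the claimed estimate.

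Finally, I would verify coverage and apply a union bound. For any $\vz$ with $V_\Prob(\Blg(\vz))>\epsilon$, set $I=I(\vz)$ and pad $D(\vz)$ to a set $A\supseteq D(\vz)$ of size exactly $r$ using indices from $K(\vz)\setminus I$; this is possible precisely because $N\geq r+d$ guarantees $|K(\vz)\setminus I|\geq r-|D(\vz)|$, and it ensures $A\cap I=\emptyset$, so that $\vz\in T_{A,I}$. Thus the event $\{V_\Prob(\Blg(\vz))>\epsilon\}$ is covered by the $\binom{N}{r}\binom{N-r}{d}$ sets $T_{A,I}$, and the union bound over them, followed by minimizing over $m\in\Ne\cap[d,N-r]$, yields the stated $q(N,\epsilon)$. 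I expect the main obstacle to be that $\sigma$, and hence the discarded set $D(\vz)$, may depend on all $N$ constraints, so one cannot treat the discarded samples as independent fresh draws; the padding device circumvents this by passing to the over-approximating event $T_{A,I}$, which constrains only the $N-r$ coordinates outside $A$ and thereby decouples the estimate from $\sigma$ entirely, reducing everything to the already-established consistent bound.
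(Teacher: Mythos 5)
Your proof is correct and follows essentially the same route as the paper: a union bound over the $\binom{N}{r}$ possible discarded sets, reducing each piece to the consistent-case bound applied to the $N-r$ retained samples. The only cosmetic differences are that you unroll the inner union over the $\binom{N-r}{d}$ compression sets into the outer one (working with pairs $(A,I)$ and Lemma~\ref{lem:key-lemma-strong} directly, rather than invoking Theorem~\ref{thm:our-bound-strong} as a black box), and that your padding of $D(\vz)$ to size exactly $r$ explicitly covers selections that discard fewer than $r$ constraints, a case the paper's union over exactly-$(N-r)$-element selections glosses over.
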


Figure~\ref{fig:compare-weak} shows the bound provided in Theorem~\ref{thm:our-bound-weak}.
As we can see, in some regimes (roughly, $d>r$), the bound is better than the one in item 1) and requires the same assumptions.

\subsection{Proof of Theorem~\ref{thm:our-bound-weak}}

The proof builds upon Theorem~\ref{thm:our-bound-strong} and elementary combinatorics:

\begin{proof}[Proof of Theorem~\ref{thm:our-bound-weak}]
Fix $N\in\Ne_{\geq d+r}$, $\Prob$ a probability distribution on $Z$ and $\epsilon\in(0,1]$.
Let $T=\{\vz\in Z^N : V_\Prob(\Alg(\vz))>\epsilon\}$.
Define $\Delta=\{(i_1,\ldots,i_{N-r})\in\Ne^{N-r} : 1\leq i_1<\ldots<i_{N-r}\leq N\}$, and for each $I\coloneqq(i_1,\ldots,i_{N-r})\in\Delta$, define $T_I = \{\vz\in T : \sigma(\vz)=\vz'\}$, where $\vz$ is a shorthand notation for $(z_1,\ldots,z_N)$, and $\vz'$ for $(z_{i_1},\ldots,z_{i_{N-r}})$.
We will show that for all $I\in\Delta$,
\begin{equation}\label{eq:bound-inter}
\Prob^N(T_I) \leq \binom{N-r}{d}\min_{m=d,\ldots,N-r}\binom{m}{d}^{-1}(1-\epsilon)^{N-r-m}.
\end{equation}
Therefore, fix $I\coloneqq(i_1,\ldots,i_{N-r})\in\Delta$.
Note that $T_I \subseteq \{\vz\in Z^N : V_\Prob(\Alg(\vz'))>\epsilon\}$, where $\vz$ is a shorthand notation for $(z_1,\ldots,z_N)$, and $\vz'$ for $(z_{i_1},\ldots,z_{i_{N-r}})$.
We deduce \eqref{eq:bound-inter} from Theorem~\ref{thm:our-bound-strong} and the fact that $\Prob^N(\{\vz\in Z^N : V_\Prob(\Alg(\vz'))>\epsilon\})=\Prob^{N-r}(\{\vz'\in Z^{N-r} : V_\Prob(\Alg(\vz'))>\epsilon\})$.
We obtain the final result by using the union bound: $\Prob^N(T) \leq \sum_{I\in\Delta} \Prob^N(T_I)$, and the observation that $\lvert\Delta\rvert=\binom{N}{r}$.
\end{proof}

%%%%%%%%%%%%%%%%%%%%%%%%%%%%%%%%%%%%%%%%%%%%%%%%%%%%%%%%%%%%%%%%%%%%%%%%%%%%%%%%%%%%%%%%%%%%%%%%%%%%
\section{Conclusion}

We reviewed and compared different confidence bounds available in the literature for scenario decision making based on the compression size of the problem.
Some of these bounds are known to be optimal but require strong assumptions while others require weaker assumptions but are less good and are not known to be optimal.
We improved the previous results by providing new bounds that do not require the strongest assumptions and are better than the available bounds with the same assumptions.
For future work, we plan to improve these bounds even further or showing their optimality, we also plan to extend them to other approaches such as the ``wait-and-judge'' approach.

% \appendices
% 
% Appendixes, if needed, appear before the acknowledgment.

% \section*{Acknowledgment}

% We thank a bunch of very nice people.

\bibliographystyle{IEEEtran}
\bibliography{myrefs}

\end{document}